\newcommand{\qed}{$\;\;\;\Box$}
\newenvironment{proof}{\par\smallbreak{\sl\bf Proof.~}}
{\unskip\nobreak\hfill \qed \par\medbreak}
\newcounter{claim}
\renewcommand{\theclaim}{\arabic{claim}}
{\par\medskip\par}
\newcommand{\N}{{\mathbb N}}
\newcommand{\R}{{\mathbb R}}
\newcommand{\Z}{{\mathbb Z}}
\newcommand{\Q}{{\mathbb Q}}
\newcommand{\CC}{{\cal C}}
\newcommand{\KK}{{\cal K}}
\newcommand{\Cv}{\CC_{n}}
\newcommand{\beq}{\begin{equation}}
\newcommand{\ee}{\end{equation}}
\renewcommand{\d}{\partial}
\newtheorem{thm}{Theorem}[section]
\newtheorem{lem}[thm]{Lemma}
\newtheorem{defn}[thm]{Definition}
\newtheorem{rem}[thm]{Remark}
\newcommand{\ga}{\gamma}
\newcommand{\vphi}{\varphi}
\newcommand{\om}{\tau}
\newcommand{\reff}[1]{(\ref{#1})}      
\title{
Periodic Solutions to Dissipative Hyperbolic Systems. I: Fredholm Solvability of Linear Problems 
} 
\newcounter{thesame}
\author{
Irina Kmit
 \ \ \ Lutz Recke\\
{\small
Institute of Mathematics, Humboldt University of Berlin,}
\\
{\small Rudower Chaussee 25, D-12489 Berlin, Germany }
\\
{\small
and Institute for Applied Problems of Mechanics and Mathematics, }
\\
{\small
Ukrainian Academy of Sciences,  Naukova St.\ 3b, 79060 Lviv,
Ukraine 
}
\\
{\small   E-mail:
{\tt kmit@informatik.hu-berlin.de}}\\[5mm]
{\small
Institute of Mathematics, Humboldt University of Berlin,}\\
{\small 
Rudower Chaussee 25, D-12489 Berlin, Germany}\\
{\small   E-mail:
{\tt recke@mathematik.hu-berlin.de}}
}
\date{}
\begin{document}

\maketitle

\begin{abstract}
\noindent
This paper concerns linear first-order hyperbolic systems in one space dimension 
of the type 
$$
\partial_tu_j  + a_j(x,t)\partial_xu_j + \sum\limits_{k=1}^nb_{jk}(x,t)u_k = f_j(x,t),\;  x \in (0,1),\; j=1,\ldots,n,
$$
with periodicity conditions in time and reflection boundary conditions in space. 
We state a non-resonance condition (depending on the coefficients $a_j$ and  $b_{jj}$ and the boundary 
reflection coefficients), which implies  Fredholm solvability of the problem
in the space of continuous functions. Further, we state one more  non-resonance condition (depending 
also on $\d_ta_j$), which implies $C^1$-solution regularity. Moreover, we give examples showing that both
non-resonance conditions  cannot be dropped, in general.
Those conditions are robust under small perturbations of the problem data.
Our results work for many non-strictly hyperbolic systems, but they are new even in the case 
of strict hyperbolicity.
\end{abstract}

\emph{Key words:} first-order hyperbolic systems, time-periodic solutions, 
reflection boundary conditions,
no small divisors, Fredholm solvability.

\emph{Mathematics Subject Classification:} 35B10, 35L40, 47A53

\thispagestyle{empty}

\section{Introduction}\label{sec:intr} 
\renewcommand{\theequation}{{\thesection}.\arabic{equation}}
\setcounter{equation}{0}

\subsection{Problem and main results}\label{sec:results}

This paper concerns general linear first-order
hyperbolic  systems   in one space dimension of the type 
\beq\label{eq:t1}
\partial_tu_j  + a_j(x,t)\partial_xu_j + \sum\limits_{k=1}^nb_{jk}(x,t)u_k = f_j(x,t), \quad x \in (0,1), \; j=1,\ldots,n
\ee
with time-periodicity conditions
\beq\label{eq:t2}
u_j(x,t+2\pi) = u_j(x,t),\quad  x \in [0,1], \; j=1,\ldots,n
\ee
and reflection boundary conditions
\beq\label{eq:t3}
\begin{array}{rcl}
\displaystyle
u_j(0,t) &=&\displaystyle \sum\limits_{k=m+1}^nr_{jk}(t)u_k(0,t),\;j=1,\ldots,m,\\
\displaystyle
u_j(1,t) &=&\displaystyle \sum\limits_{k=1}^mr_{jk}(t)u_k(1,t), \;j=m+1,\ldots,n.
\end{array}
\ee
Here $m$ and $n$ are integers with $0\le m\le n$. Throughout the paper it 
is supposed that the 
functions $r_{jk}: \R \to \R$
and $a_j,b_{jk},f_j:[0,1]\times \R \to \R$ are  $2\pi$-periodic with respect to $t$,
and that the coefficients $r_{jk},a_j$ and $b_{jk}$ are $C^1$-smooth.
Additionally, we suppose that
\beq
\label{ungl}
a_j(x,t)\not=0 \;\mbox{ for all } x \in [0,1],  t \in \R  \mbox{ and }  j=1,\ldots,n
\ee
and that 
\beq
\label{cass}
\begin{array}{l}
\mbox{for all } 1 \le j \not= k \le n \mbox{ there exists }  \tilde b_{jk} \in C^1([0,1]\times \R) \mbox{ such that } \\
b_{jk}(x,t)=\tilde b_{jk}(x,t)(a_k(x,t)-a_j(x,t))  \mbox{ for all }  x \in [0,1]  \mbox{ and }  t \in \R.
\end{array}
\ee

Roughly speaking, we will prove the following:
If the first non-resonance condition  \reff{eq:t8} on the data $a_j, b_{jj}$ and  $r_{jk}$
is satisfied (which is the case, for example, if the functions $|r_{jk}|$ with $1 \le j \le m$ and $m+1 \le k \le n$ or 
with 
 $1 \le k \le m$ and $m+1 \le j \le n$ 
are sufficiently small), then a Fredholm alternative 
is true for the system~(\ref{eq:t1})--(\ref{eq:t3}),
i.e., 
\begin{itemize}
\item
either the system~(\ref{eq:t1})--(\ref{eq:t3}) with $f=(f_1,\ldots,f_n)=0$ has a nontrivial continuous
solution (then the vector space of those solutions has a finite dimension),
\item
or  for any continuous right-hand side $f$ the system~(\ref{eq:t1})--(\ref{eq:t3}) has a unique 
continuous solution $u=(u_1,\ldots,u_n)$ 
(then the map $f \mapsto u$ is continuous with respect to the supremum norm).
\end{itemize}
Moreover, if the second  non-resonance condition \reff{eq:t81}
is satisfied (which is the case  if, for example, the functions $|\d_ta_j|$ are sufficiently small), 
then the solutions to (\ref{eq:t1})--(\ref{eq:t3}) have additional regularity. In particular, if all 
coefficients 
$a_j$ are $t$-independent and if all the data are $C^\infty$-smooth, then all solutions to  (\ref{eq:t1})--(\ref{eq:t3}) 
 are $C^\infty$-smooth. It should be emphasized that  the second non-resonance condition cannot be dropped, in 
 general (Remark 1.4).
Further, we give an example showing that, if the  first  non-resonance condition is not 
satisfied, then it may happen that the Fredholm
solvability is not true (Remark 1.3). Also, we  provide an example showing that, if the assumption \reff{cass} is not 
satisfied, then it may happen that neither Fredholm
solvability nor higher order regularity holds (Remark 1.5).

In order to formulate our results more precisely, let us introduce the characteristics of the hyperbolic system \reff{eq:t1}. 
Given $j=1,\ldots,n$, $x \in [0,1]$, and $t \in \R$, the $j$-th characteristic is defined as the solution 
$\xi\in [0,1] \mapsto \om_j(\xi,x,t)\in \R$ of the initial value problem
\beq\label{char}
\partial_\xi\om_j(\xi,x,t)=\frac{1}{a_j(\xi,\om_j(\xi,x,t))},\;\;
\om_j(x,x,t)=t.
\ee
Moreover,  we denote
\begin{eqnarray}
\label{cdef}
c_j(\xi,x,t)&:=&\exp \int_x^\xi
\frac{b_{jj}(\eta,\om_j(\eta,x,t))}{a_{j}(\eta,\om_j(\eta,x,t))}\,d\eta,\\
\label{ddef}
d_j(\xi,x,t)&:=&\frac{c_j(\xi,x,t)}{a_j(\xi,\om_j(\xi,x,t))}.
\end{eqnarray}
Straightforward calculations (see Section 2) show that a $C^1$-map $u:[0,1]\times \R \to \R^n$ is a solution to 
the PDE problem \reff{eq:t1}--\reff{eq:t3} if and only if
it satisfies the following system of integral equations
\begin{eqnarray}
\label{rep1}
\lefteqn{
u_j(x,t)=c_j(0,x,t)\sum_{k=m+1}^nr_{jk}(\om_j(0,x,t))u_k(0,\om_j(0,x,t))}\nonumber\\
&&-\int_0^x d_j(\xi,x,t)\sum_{k=1\atop k\not=j}^n b_{jk}(\xi,\om_j(\xi,x,t))u_k(\xi,\om_j(\xi,x,t))d\xi\nonumber\\ 
&&+\int_0^x d_j(\xi,x,t)f_j(\xi,\om_j(\xi,x,t))d\xi,\quad j=1,\ldots,m,
\end{eqnarray}
\begin{eqnarray}
\label{rep2}
\lefteqn{
u_j(x,t)=c_j(1,x,t)\sum_{k=1}^m r_{jk}(\om_j(1,x,t))u_k(1,\om_j(1,x,t))}\nonumber\\
&&+\int_x^1 d_j(\xi,x,t)\sum_{k=1\atop k\not=j}^n
b_{jk}(\xi,\om_j(\xi,x,t))u_k(\xi,\om_j(\xi,x,t))d\xi\nonumber\\ 
&&-\int_x^1 d_j(\xi,x,t)f_j(\xi,\om_j(\xi,x,t))d\xi,\quad j=m+1,\ldots,n.
\end{eqnarray}
This motivates the following definition:
\begin{defn}\label{defn:cont}
(i) By $\Cv$ we denote the vector space of all continuous maps $u:[0,1]\times\R \to \R^n$ which  satisfy \reff{eq:t2},
with the norm
\beq
\label{infnorm}
\|u\|_\infty:=\max_{1 \le j \le n} \;\max_{0 \le x \le 1} \;\max_{t \in \R}|u_j(x,t)|.
\ee
(ii) A function $u \in \Cv$ is called a continuous solution to  \reff{eq:t1}--\reff{eq:t3} if it  satisfies \reff{rep1} and \reff{rep2}.

(iii) A function $u\in C^1\left([0,1]\times\R;\R^n\right)$ is called a classical solution to 
 \reff{eq:t1}--\reff{eq:t3} if it satisfies  \reff{eq:t1}--\reff{eq:t3} pointwise.
\end{defn}
Finally, set
$$
\begin{array}{lcl}
\displaystyle
R^0&:=&\displaystyle\max_{m+1\le j \le n}\sum_{k=1}^m \exp \int_0^1\max_{\tau,t\in \R}
\left(\frac{b_{kk}(\eta,t)}{a_k(\eta,t)}-\frac{b_{jj}(\eta,\tau)}{a_j(\eta,\tau)}\right)\,d\eta\,\sum_{l=m+1}^n\max_{\tau,t\in \R}|r_{jk}(\tau)r_{kl}(t)|,
\\\displaystyle
S^0&:=&\displaystyle\max_{1\le j \le m}\sum_{k=m+1}^n \exp \int_0^1\max_{\tau,t\in \R}
\left(\frac{b_{kk}(\eta,t)}{a_k(\eta,t)}-\frac{b_{jj}(\eta,\tau)}{a_j(\eta,\tau)}\right)\,d\eta\,
\sum_{l=1}^m\max_{\tau,t\in \R}|r_{jk}(\tau)r_{kl}(t)|,\\\displaystyle
R^1&:=&\displaystyle\max_{m+1\le j \le n}\sum_{k=1}^m \exp \int_0^1\max_{\tau,t\in \R}
\left(\frac{b_{kk}(\eta,t)}{a_k(\eta,t)}-\frac{b_{jj}(\eta,\tau)}{a_j(\eta,\tau)}
+\frac{\d_ta_{j}(\eta,\tau)}{a_j(\eta,\tau)^2} - \frac{\d_ta_{k}(\eta,t)}{a_k(\eta,t)^2}
\right)\,d\eta\,\\ &&\displaystyle\times
\sum_{l=m+1}^n\max_{\tau,t\in \R}|r_{jk}(\tau)r_{kl}(t)|,\\\displaystyle
S^1&:=&\displaystyle\max_{1\le j \le m}\sum_{k=m+1}^n \exp \int_0^1\max_{\tau,t\in \R}
\left(\frac{b_{kk}(\eta,t)}{a_k(\eta,t)}-\frac{b_{jj}(\eta,\tau)}{a_j(\eta,\tau)}
+\frac{\d_ta_{j}(\eta,\tau)}{a_j(\eta,\tau)^2}- \frac{\d_ta_{k}(\eta,t)}{a_k(\eta,t)^2}
\right)\,d\eta\,\\ &&\displaystyle\times
\sum_{l=1}^m\max_{\tau,t\in \R}|r_{jk}(\tau)r_{kl}(t)|.
\end{array}
$$
 Denote by  $\KK$ the vector space of all  continuous solutions to
\reff{eq:t1}--\reff{eq:t3} with $f=0$. \\

The following theorem is the main result of this paper:

\begin{thm}\label{thm:Fredh} Suppose 
\reff{ungl}, \reff{cass}  and 
\beq
\label{eq:t8}
R^0<1 \mbox{ or } S^0<1.
\ee
Then the following is true:

(i) $\dim \KK <\infty$.

(ii) The vector space  of all  $f \in \Cv$ such that there exists a continuous solution 
to  (\ref{eq:t1})--(\ref{eq:t3}) is a closed subspace of codimension $\dim \KK$ in $\CC_n$.

(iii) Either  $\dim \KK>0$ or for any $f \in \Cv$ there exists exactly one continuous solution $u$
to  (\ref{eq:t1})--(\ref{eq:t3}). In the latter case the map $f \in \Cv \mapsto u \in \Cv$ is continuous.

(iv) Suppose that the functions  
$f_j$ are continuously differentiable with respect to $t$ and 
\beq
\label{eq:t81}
\max \{R^0,R^1\}<1 \mbox{ or }\max \{S^0,S^1\}<1.
\ee
Then any 
continuous solution 
to  (\ref{eq:t1})--(\ref{eq:t3}) is a classical solution to  (\ref{eq:t1})--(\ref{eq:t3}).

(v) If all coefficients $a_j$ are $t$-independent and if all functions $a_j, b_{jk}, f_j$ and $r_{jk}$ are $C^\infty$-smooth, then any continuous solution 
to  (\ref{eq:t1})--(\ref{eq:t3}) is  $C^\infty$-smooth.
\end{thm}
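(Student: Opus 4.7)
The plan is to recast the integral system (\ref{rep1})--(\ref{rep2}) as an abstract operator equation on $\Cv$ and deploy Riesz--Schauder theory. Write the right-hand sides of (\ref{rep1}) and (\ref{rep2}) as $(Au)(x,t)+(Bu)(x,t)+(Ff)(x,t)$, where $A$ collects the boundary-reflection terms (the sums containing $r_{jk}$ multiplied by $c_j(0,x,t)$ or $c_j(1,x,t)$), $B$ collects the interior off-diagonal integral terms (those with $k\neq j$), and $Ff$ is the forcing contribution. Each of $A,B,F$ is a bounded linear operator on $\Cv$, and $u\in\Cv$ is a continuous solution iff $u=Au+Bu+Ff$. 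The strategy is to show that $I-A$ is boundedly invertible under (\ref{eq:t8}), that $B$ is compact under (\ref{cass}), and then to conclude that $I-A-B=(I-A)(I-(I-A)^{-1}B)$ is a Fredholm operator of index zero on $\Cv$, from which (i)--(iii) follow by Riesz--Schauder.

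Invertibility of $I-A$ exploits the \emph{two-color} structure of (\ref{eq:t3}): for $j\leq m$ the component $(Au)_j$ depends only on traces of $u_k$ with $k>m$, and vice versa. Hence for $j\leq m$ one has $|(A^2u)_j(x,t)|\leq R^0_j(x,t)\max_{k>m}\|(Au)_k\|_\infty\leq S^0T^0\|u\|_\infty$, and symmetrically for $j>m$, so $\|A^2\|\leq S^0T^0<1$. A Neumann series in $A^2$ then produces $(I-A)^{-1}$ on $\Cv$. Compactness of $B$ is where (\ref{cass}) plays its decisive role: the factor $a_k-a_j$ in $b_{jk}$ lets one rewrite $b_{jk}(\xi,\om_j)u_k(\xi,\om_j)$ in a form permitting integration by parts in $\xi$, using that the transport derivative $\partial_\xi[u_k(\xi,\om_j(\xi,x,t))]$ can be expressed through $u_k$'s own PDE without any $x$- or $t$-derivative of $u$ once the $(a_j-a_k)$-prefactor is absorbed. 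The resulting integrated kernel for $Bu$ is continuous and $(x,t)$-equicontinuous on bounded subsets of $\Cv$, so Arzel\`a--Ascoli delivers compactness. Executing this integration by parts cleanly, while respecting periodicity in $t$ and the boundary traces of $u_k$, is the step I expect to be the main technical obstacle.

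For part (iv), differentiate (\ref{rep1})--(\ref{rep2}) formally in $t$. The sensitivity $\partial_t\om_j$ along the $j$-characteristic satisfies a linear ODE whose solution contributes precisely the exponential factor $\exp\int\partial_ta_j/a_j^2\,d\eta$ that distinguishes $R^1_j$ from $R^0_j$. Consequently the resulting integral equation for $v:=\partial_tu$ has the same Volterra-plus-boundary form as (\ref{rep1})--(\ref{rep2}), but with the boundary operator $A$ replaced by an analogue $A_1$ satisfying $\|A_1^2\|\leq S^1T^1<1$ by (\ref{eq:t81}). The Fredholm setup of the previous paragraph applied to $v$, together with a difference-quotient/approximation argument (comparing $(u(x,t+h)-u(x,t))/h$ with the solution of the differentiated equation), shows that the formal derivative is actually realized by $\partial_tu\in\Cv$. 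Once $\partial_tu$ is continuous, (\ref{ungl}) allows one to solve the PDE for $\partial_xu_j=a_j^{-1}(f_j-\partial_tu_j-\sum_k b_{jk}u_k)$, so $\partial_xu$ is continuous as well. Hence $u\in C^1$ and satisfies (\ref{eq:t1})--(\ref{eq:t3}) pointwise.

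Part (v) follows by bootstrapping (iv). When the $a_j$ are $t$-independent, $\partial_ta_j\equiv 0$ gives $R^1\equiv R^0$, so $S^1T^1=S^0T^0<1$ automatically, and (iv) applies without any additional hypothesis. Iterating the $t$-differentiation argument with the same contraction bound $S^0T^0$ yields continuous $\partial_t^ku$ for every $k\geq 0$. Solving the PDE for $\partial_xu_j$ and then differentiating repeatedly in $t$ produces continuous $\partial_t^k\partial_xu$; induction on the $x$-order, using the $C^\infty$ regularity of $a_j,b_{jk},f_j,r_{jk}$, delivers continuous mixed derivatives $\partial_x^\ell\partial_t^ku$ of every order. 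Therefore $u\in C^\infty([0,1]\times\R;\R^n)$.
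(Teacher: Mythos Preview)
Your overall plan is right in spirit, but the claim that $B$ is compact on $\Cv$ is false, and this is the heart of the argument. The paper states this explicitly: $D$ (your $B$) is a \emph{partial integral operator} and is not compact, even under \reff{cass}. To see this concretely, take constant $a_j\neq a_k$ and $b_{jk}\neq 0$, and set $u_k(\xi,s)=\sin\bigl(N(s-\xi/a_j)\bigr)$. Along the $j$-characteristic $\om_j(\xi,x,t)=t+(\xi-x)/a_j$, the function $u_k(\xi,\om_j(\xi,x,t))$ is independent of $\xi$, so $(Bu)_j(x,t)=\sin\bigl(N(t-x/a_j)\bigr)\int_{x_j}^x d_{jk}(\xi,x,t)\,d\xi$, which has no convergent subsequence as $N\to\infty$. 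Your integration-by-parts sketch cannot work for $B$ alone: the transport derivative $\partial_\xi[u_k(\xi,\om_j(\xi,x,t))]$ equals $\frac{a_j-a_k}{a_j}\,\partial_x u_k+\text{lower order}$, so it still contains a derivative of $u$; there is no cancellation that produces a purely zero-order expression here, and even formally the boundary term at $\xi=x$ would be a non-compact multiplication operator.

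What actually works, and what the paper does, is to show that the \emph{square} $\bigl((I-A)^{-1}B\bigr)^2$ is compact, and then invoke the criterion that $I-T$ is Fredholm of index zero whenever $T^2$ is compact. Concretely one proves that $B^2$ and $BA$ map $\Cv$ boundedly into $\CC_n^1$. In $(B^2u)_j$ the innermost factor is $\partial_t u_l\bigl(\eta,\om_k(\eta,\xi,\om_j(\xi,x,t))\bigr)$, and now the $\xi$-derivative $\frac{d}{d\xi}u_l\bigl(\eta,\om_k(\eta,\xi,\om_j(\xi,x,t))\bigr)$ picks up the factor $\partial_t\om_k\cdot\bigl(\frac{1}{a_j}-\frac{1}{a_k}\bigr)$, which is exactly what the prefactor $b_{jk}=\tilde b_{jk}(a_k-a_j)$ from \reff{cass} cancels. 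This allows a genuine integration by parts in $\xi$ (with $\eta$ fixed, after Fubini) that bounds $\|\partial_t B^2 u\|_\infty$ by $\mbox{const}\,\|u\|_\infty$; the same mechanism handles $BA$. This second iteration is essential and cannot be skipped.

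Your argument for (iv) inherits the same issue: you need the regularity gain from $B^2$ and $BA$ (not from $B$) to see that the right-hand side of $(I-A)u=B(A+B)u+(I+B)Ff$ lies in the space of $t$-differentiable functions before applying the contraction $\|A_1^2\|\le S^1T^1<1$ to conclude $\partial_t u\in\Cv$. Once (iv) is repaired along these lines, your bootstrap for (v) is essentially the paper's.
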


It is well-known that the Fredholm property of the linearization is a key for many local investigations of
time-periodic solutions to nonlinear ODEs and parabolic PDEs. This is the case for Hopf bifurcation, 
for saddle node bifurcation or  period doubling bifurcation of periodic solutions as well as 
for small periodic forcing of stationary or periodic solutions 
(see, e.g. \cite{IJ} for ODEs and
\cite{Ki} for parabolic PDEs).
But almost nothing is known whether similar  results are true for nonlinear (dissipative) hyperbolic PDEs.

The first aim of the present paper is to  make possible
developing   a theory of 
local smooth continuation and  bifurcation of
time-periodic solutions to  nonlinear hyperbolic PDEs. In particular, in \cite{KmRe3}
we applied our results to prove a Hopf bifurcation theorem for  semilinear  hyperbolic PDEs.

The second aim is applications 
to semiconductor laser dynamics \cite{LiRadRe,Rad,RadWu}.
Phenomena like Hopf bifurcation (describing the appearance of selfpulsations of lasers) 
and  periodic forcing of stationary  solutions 
(describing the modulation of stationary laser states by time periodic electric pumping)
and periodic solutions (describing the synchronization of selfpulsating laser states with small time periodic external optical signals,
cf. \cite{BRS,RePe,ReSa,ReSa1})
are essential for many applications of  semiconductor 
laser devices in communication systems.

In \cite{KmRe1} and  \cite{KmRe2} we proved similar to Theorem \ref{thm:Fredh}
results for the autonomous case,
i.e., the case, when the coefficients $a_j, b_{jk}$ and $r_{jk}$ are $t$-independent. There the weak formulation
of the problem  (\ref{eq:t1})--(\ref{eq:t3})
was a system of variational equations, and we used the  method of Fourier series in anisotropic Sobolev spaces
as in~\cite{Vejvoda}.
In the present paper  the weak formulation
of the problem  (\ref{eq:t1})--(\ref{eq:t3}) is the
system \reff{rep1}--\reff{rep2} of integral equations, and we use the method of integration along characteristics in $C$-spaces. 
In  \cite{KmRe1} and  \cite{KmRe2} it is shown that in the autonomous case the non-resonance condition  \reff{eq:t8} 
implies 
 a uniform positive lower bound for the absolute values of the denominators in the Fourier coefficients of the solutions, i.e., no small divisors occur.

It remains an open question whether Theorem \ref{thm:Fredh} admits a generalization  
to higher space dimensions. On the other hand,  it can be generalized to  
problems  with nonlocal terms in the differential equations (\ref{eq:t1})
as well as in the boundary conditions (\ref{eq:t3})
(including nonlocal integral terms and feedback reflection boundary conditions). Then the 
non-resonance conditions  \reff{eq:t8} and  \reff{eq:t81}  should be changed accordingly.
Also, it turns out that  Theorem \ref{thm:Fredh} can be developed for  general linear
  second-order wave equations with Robin boundary conditions, which is a task of a 
forthcoming paper. Again, a nontrivial
question  is how to modify the conditions \reff{eq:t8} and~\reff{eq:t81}.

The paper is organized as follows. 
In Section~\ref{sec:remarks} we comment about the assumptions 
\reff{cass}, \reff{eq:t8} and  \reff{eq:t81}.
In Section~\ref{sec:characteristics} we show that any classical solution to  (\ref{eq:t1})--(\ref{eq:t3}) 
is a continuous solution in the sense of Definition \ref{defn:cont}, and that any  $C^1$-smooth continuous solution
is a classical one.
In Section~\ref{sec:abstract} we  introduce an abstract representation of the system \reff{rep1}--\reff{rep2}. 
Moreover, we show that in the ``diagonal'' case, i.e., if $b_{jk}=0$ for all $j \not= k$,
there exists exactly one continuous solution to  (\ref{eq:t1})--(\ref{eq:t3}) for every $f \in \CC_n$.
The Fredholm alternative  stated in the assertions (i)-(iii) of 
Theorem \ref{thm:Fredh} is proved in Section~\ref{sec:fredh},
while the solution regularity given by  the assertions (iv) and (v) 
is proved in Section \ref{sec:smoothdep1}.

\subsection{Some Remarks} 
\label{sec:remarks}
\begin{rem}
{\bf about the first non-resonance condition  \reff{eq:t8}}\rm\ 
If \reff{eq:t8} is not fulfilled,  then the assertions (i) and (v) of Theorem \ref{thm:Fredh} are not true, in general. 
To show this, let us
consider the following example  
satisfying all but \reff{eq:t8} assumptions of Theorem \ref{thm:Fredh}:
Set $m=1, n=2, a_1(x,t)=-a_2(x,t)=\alpha=\mbox{const}, b_{jk}(x,t)=0, f_j(x,t)=0$, and $r_{12}=r_{21}=1$.
Then the system \reff{rep1}--\reff{rep2} reads 
\begin{eqnarray}
&&u_1(x,t)=u_2(0,t-\alpha x), \label{1}\\
&&u_2(x,t)=u_1(1,t+\alpha (x-1)).\label{2}
\end{eqnarray}
Here $R^0=S^0=1$, i.e.,  \reff{eq:t8} is not satisfied.
Inserting \reff{2} into \reff{1} and putting $x=1$, we get
\beq
\label{3}
u_1(1,t)=u_1(1,t-2\alpha).
\ee
If $\alpha/2\pi$ is irrational, then the functional equation \reff{3} does not have nontrivial continuous solutions. If 
$$
\frac{\alpha}{2\pi}=\frac{p}{q} \mbox{ with } p \in \Z \mbox{ and } q \in \N,
$$
then any $2\pi/q$-periodic  function is a solution to \reff{3}.
In other words,
$$
\dim \KK=\left\{
\begin{array}{cl}
0 & \mbox{ if } \alpha/2\pi \not\in \Q,\\
\infty & \mbox{ if } \alpha/2\pi \in \Q,
\end{array}
\right.
$$
hence, the Fredholm solvability conclusion of Theorem \ref{thm:Fredh} is failed.
Moreover, in the case $ \alpha/2\pi \in \Q$ there exist continuous solutions to  (\ref{eq:t1})--(\ref{eq:t3}) which are not classical one's.
\end{rem}

\begin{rem}
{\bf about the second non-resonance condition  \reff{eq:t81}}\rm\ 
If \reff{eq:t81} is not fulfilled,  then the assertion (iv) of Theorem \ref{thm:Fredh} is not true, in general. 
To show this, let us
consider the following example  
satisfying all but \reff{eq:t81} assumptions of Theorem \ref{thm:Fredh}:
\beq
\begin{array}{ll}
\partial_tu_1  - (2+\sin t)\partial_xu_1  = 0, \quad \partial_tu_2  + \frac{2}{4\pi-1}\partial_xu_2  = 1\nonumber\\
u_j(x,t+2\pi) = u_j(x,t),\quad j=1,2\nonumber\\
u_1(0,t)=u_2(0,t),\quad u_2(1,t)=r(t)u_1(1,t).
\end{array}
\ee
Hence,
$$
\tau_1(\xi,x,t)=A^{-1}(A(t)+\xi-x)  \mbox{ with } A(t):=-2t+\cos t, 
$$
$$
 \tau_2(\xi,x,t)=(\xi-x)\frac{4\pi-1}{2}+t,
$$
and
$$
\d_t\tau_1(\xi,x,t)=\frac{a(t)}{a(\tau_1(\xi,x,t))}  \mbox{ with } a(t):=-2-\sin t.  
$$
Then the system \reff{rep1}--\reff{rep2} reads 
\begin{eqnarray}
&&u_1(x,t)=u_2(0,A^{-1}(A(t)-x)), \label{1a}\\
&&u_2(x,t)=r\left((1-x)\frac{4\pi-1}{2}+t\right)u_1\left(1,(1-x)\frac{4\pi-1}{2}+t\right)+\frac{4\pi-1}{2}.\label{2a}
\end{eqnarray}
Inserting \reff{2a} into \reff{1a}, we get
\beq
\label{3a}
u_1(1,t)=r\left(\frac{4\pi-1}{2}+A^{-1}(A(t)-1)\right)u_1\left(1,\frac{4\pi-1}{2}+A^{-1}(A(t)-1)\right)+\frac{4\pi-1}{2}.
\ee
We have 
$$
t+2\pi=\frac{4\pi-1}{2}+A^{-1}(A(t)-1)\nonumber 
$$
if and only if $A(t)-1=A(t+\frac{1}{2})$ or, the same,
$$
\cos t-\cos \left(t+\frac{1}{2}\right)=-2\sin\left(t+\frac{1}{4}\right)\sin\left(-\frac{1}{4}\right)=0.
$$
This equation has two different solutions $\pi-1/4$ and $2\pi-1/4$. Set $t_0:=\pi-1/4$.
Then \reff{3a} yields
$$
u_1(1,t_0)=r(t_0)u_1\left(1,t_0\right)+\frac{4\pi-1}{2}
$$
and 
\beq
\label{8a}
\d_tu_1(1,t_0)=r(t_0)\d_t\tau_1(0,1,t_0)\d_tu_1(1,t_0)+r^\prime(t_0)\d_t\tau_1(0,1,t_0)u_1(1,t_0).
\ee
Because of 
$$
\d_t\tau_1(0,1,t_0)=\frac{-2-\sin(\pi-1/4)}{-2-\sin(\pi+1/4)}>1,  
$$
 we can choose a smooth $2\pi$-periodic function $r(t)$ such that
\beq
\label{9a}
0 \le r(t)<1 \mbox{ for all } t \in \R
\ee
and
\beq
\label{10a}
r(t_0)=\frac{1}{\d_t\tau_1(0,1,t_0)} \mbox{ and } r^\prime(t_0)\not=0.
\ee
From \reff{9a} it follows that $R^0=\max_t|r(t)|<1$, and
there exists exactly one continuous solution to \reff{3a} and, hence, to \reff{1a}--\reff{2a}.
But this solution is not differentiable at $t=t_0$ because \reff{10a} contradicts to \reff{8a}.
Moreover, we have
\begin{eqnarray*}
S^0&=&\max_t|r(t)|\exp \max_t\frac{a^\prime(t)}{a(t)^2}\\
& \ge & r(t_0)\exp \int_0^1\frac{a^\prime(\tau_1(\eta,1,t_0))}{a(\tau_1(\eta,1,t_0))^2} d \eta\\
& = & r(t_0)\exp \int_0^1\frac{d}{d\eta}\ln a(\tau_1(\eta,1,t_0))d \eta\\
& = & r(t_0)\frac{a(t_0)}{a(\tau_1(0,1,t_0))}
 =  r(t_0)\d_t\tau_1(0,1,t_0)=1,
\end{eqnarray*}
\end{rem}
what means that the condition \reff{eq:t81} is not satisfied.

\begin{rem}
{\bf about the assumption  \reff{cass}}\rm\
Roughly speaking, assumption \reff{cass} means that  a certain loss 
of strict hyperbolicity, caused by leading order coefficients $a_j$ and $a_k$ with $j\not=k$, 
must be compensated by a certain vanishing behavior of the corresponding lower order coefficients $b_{jk}$.

Let us show that, if \reff{cass} is not fulfilled, 
  Theorem \ref{thm:Fredh} is not true, in general. 
With this aim we
consider the following example  
satisfying all but \reff{cass} assumptions of Theorem \ref{thm:Fredh}:
Set $m=1, n=2, a_1(x,t)=a_2(x,t)=1, b_{11}(x,t)= b_{12}(x,t)= b_{22}(x,t)=0,  b_{21}(x,t)=3/2, f_1(x,t)= f_2(x,t)=0$, and $r_{12}^0=r_{21}^1=1/2$.
Then the system \reff{rep1}, \reff{rep2} reads 
\begin{eqnarray*}
&&u_1(x,t)=\frac{1}{2}u_2(0,t-x),\\
&&u_2(x,t)=\frac{1}{2}u_1(1,t-x+1)+\frac{3}{2}\int_x^1u_1(\xi,\xi-x+t) d\xi.
\end{eqnarray*}
It is easy to verify that any continuous $2\pi$-periodic map $U:\R \to \R$ creates a solution
$$
u_1(x,t)=U(t-x),\quad u_2(x,t)=\left(2-\frac{3}{2}x\right)U(t-x)
$$
to this system.
In particular, we have $\dim \KK=\infty$, and  there exist continuous solutions to  (\ref{eq:t1})--(\ref{eq:t3}) which are not classical one's.

Let us remark that, surprisingly,  
 assumptions  of the type \reff{cass} 
are  used also in quite another circumstances,
for proving the spectrum-determined growth condition in $L^p$-spaces \cite{Guo,Luo,Neves} and in $C$-spaces \cite{Lichtner}
for semiflows generated by initial value problems for hyperbolic systems of the type  \reff{eq:t1}, \reff{eq:t3}.
\end{rem}

\section{Integration along characteristics}\label{sec:characteristics}
\renewcommand{\theequation}{{\thesection}.\arabic{equation}}
\setcounter{equation}{0}

In this section we show that a $C^1$-function $u:[0,1] \times \R \to \R^n$
satisfies the differential system \reff{eq:t1}--\reff{eq:t3} if and only if it satisfies the 
 integral system \reff{rep1}-\reff{rep2}.
The type of calculations is well-known, so we do this for the convenience of the reader.

Standard results about initial value problems for ordinary differential equations yield that the functions
$\om_j:[0,1]\times[0,1]\times\R \to \R$ are well-defined by \reff{char}, and they are $C^1$-smooth. Moreover, it holds
\begin{eqnarray}
&&\om_j(\xi,x,t+2\pi)=\om_j(\xi,x,t)+2\pi,\label{21}\\
&&\om_j(x,\xi,\om_j(\xi,x,t))=t\label{22}
\end{eqnarray}
and
\begin{eqnarray}
\label{dx}
\d_x\om_j(\xi,x,t) & = & -\frac{1}{a_j(x,t)} \exp \int_\xi^x 
\frac{\d_ta_j(\eta,\om_j(\eta,x,t))}{a_j(\eta,\om_j(\eta,x,t))^2} d \eta,\\
\label{dt}
\d_t\om_j(\xi,x,t) & = & \exp \int_\xi^x \frac{\d_ta_j(\eta,\om_j(\eta,x,t))}{a_j(\eta,\om_j(\eta,x,t))^2} d \eta
\end{eqnarray}
for all $j=1,\ldots,n$, $\xi, x \in [0,1]$, and $t \in \R$. From \reff{dx} and \reff{dt} it follows
\beq
\label{23}
\left(\d_t+a_j(x,t)\d_x\right)\vphi(\om_j(\xi,x,t))=0
\ee
for all $j=1,\ldots,n$, $\xi,x \in [0,1]$, $t \in \R$ and any $C^1$-function $\vphi: \R \to \R$.

Now, let us show that any $C^1$-solution to  \reff{rep1}-\reff{rep2} is a solution to \reff{eq:t1}-\reff{eq:t3}.
Let $u$ be a  $C^1$-solution to  \reff{rep1}-\reff{rep2}. Then \reff{23} yields
\begin{eqnarray*}
\lefteqn{
\left(\d_t+a_j(x,t)\d_x\right)\left(r_{jk}(\om_j(0,x,t))u_k(0,\om_j(0,x,t))\right)}\\
&&=\left(\d_t+a_j(x,t)\d_x\right)\left(r_{jk}(\om_j(1,x,t))u_k(1,\om_j(1,x,t))\right)\\
&&=\left(\d_t+a_j(x,t)\d_x\right)\left(b_{jk}(\xi,\om_j(\xi,x,t))u_k(\xi,\om_j(\xi,x,t))\right)\label{der_1}\\
&&=\left(\d_t+a_j(x,t)\d_x\right)f_j(\xi,\om_j(\xi,x,t))=0,\nonumber
\end{eqnarray*}
and \reff{cdef}, \reff{ddef}, and \reff{23} imply
$$
\begin{array}{cc}
\left(\d_t+a_j(x,t)\d_x\right)c_j(\xi,x,t) = -b_{jj}(x,t)c_j(\xi,x,t),\\
\left(\d_t+a_j(x,t)\d_x\right)d_j(\xi,x,t) = -b_{jj}(x,t)d_j(\xi,x,t).
\end{array}
$$
Hence, \reff{eq:t1} is satisfied.
The time-periodicity conditions \reff{eq:t2} follow directly from  \reff{rep1}, \reff{rep2}, and \reff{21},
while the boundary conditions \reff{eq:t3} follow from  \reff{rep1}, \reff{rep2}, and \reff{22}.

Now,  let us show that any $C^1$-solution to  \reff{eq:t1}-\reff{eq:t3} is a solution to \reff{rep1}-\reff{rep2}.
Let $u$ be a  $C^1$-solution to  \reff{eq:t1}-\reff{eq:t3}. Then
\begin{eqnarray*}
\lefteqn{
\frac{d}{d\xi} u_j(\xi,\om_j(\xi,x,t))=
\d_xu_j(\xi,\om_j(\xi,x,t))+\frac{\d_tu_j(\xi,\om_j(\xi,x,t))}{a_j(\xi,\om_j(\xi,x,t))}}\\
&&=\frac{1}{a_j(\xi,\om_j(\xi,x,t))}\left(-\sum_{k=1}^n 
b_{jk}(\xi,\om_j(\xi,x,t))u_k(\xi,\om_j(\xi,x,t))+f_j(\xi,\om_j(\xi,x,t))\right).
\end{eqnarray*}
This is a linear inhomogeneous ordinary differential equation for the function $u_j(\cdot,\om_j(\cdot,x,t))$, 
and the variation of constants formula (with initial condition at $x_j$) gives
\begin{eqnarray*}
\lefteqn{
u_j(x,t)=u_j(x_j,\om_j(x_j,x,t))\exp \int_{x_j}^{x}\left(- \frac{b_{jj}(\xi,\om_j(\xi,x,t))}{a_j(\xi,\om_j(\xi,x,t))} \right)d\xi}\\
&&-\int_{x_j}^{x}\exp \int_\xi^x\left(- \frac{b_{jj}(\eta,\om_j(\eta,x,t))}{a_j(\eta,\om_j(\eta,x,t))} \right) d \eta \\
&&\times \sum_{k\not= j} \frac{b_{jk}(\xi,\om_j(\xi,x,t))u_k(\xi,\om_j(\xi,x,t))-f_j(\xi,\om_j(\xi,x,t))}
{a_j(\xi,\om_j(\xi,x,t))} d\xi.
\end{eqnarray*}
Here and in what follows we use the notation
\beq
\label{x_j}
x_j:=\left\{
\begin{array}{rcl}
0 & \mbox{for} & j=1,\dots,m,\\
1 & \mbox{for} & j=m+1,\dots,n.
\end{array}
\right.
\ee
Inserting the boundary conditions \reff{eq:t3} for $j=1,\ldots,m$, 
we get \reff{rep1} and inserting  \reff{eq:t3} for $j=1,\ldots,m$, 
we get \reff{rep2}.

\section{Abstract representation of \reff{rep1}--\reff{rep2}}\label{sec:abstract}
\renewcommand{\theequation}{{\thesection}.\arabic{equation}}
\setcounter{equation}{0}

The system  \reff{rep1}--\reff{rep2} can be written as the operator equation
\beq\label{abstr}
u=Cu+Du+Ff,
\ee
where the linear bounded operators $C,D,F: \CC_n \to \CC_n$ are defined as follows: 

Denote by $\CC_m$ the space of all continuous maps 
$v: [0,1] \times \R \to \R^m$ with $v(x,t+2\pi)=v(x,t)$ for all $x \in [0,1]$ and $t \in \R$, with the norm
$$
\|v\|_\infty:=\max_{1 \le j \le m} \max_{0 \le x \le 1} \max_{t \in \R} |v_j(x,t)|.
$$
Similarly we define the space $\CC_{n-m}$. The spaces $\CC_n$ and $\CC_m \times 
\CC_{n-m}$ will be identified, i.e., elements $u \in \CC_n$ will be written as $u=(v,w)$ with $v \in \CC_m$ and $w \in \CC_{n-m}$.
Define linear bounded operators $K:\CC_{n-m} \to \CC_m$ and  $L:\CC_{m} \to \CC_{n-m}$ by 
\beq
\label{Cdef}
\begin{array}{l}
\displaystyle (Kw)_j(x,t):=
\displaystyle c_j(0,x,t)\sum_{k=m+1}^nr_{jk}(\om_j(0,x,t))w_k(0,\om_j(0,x,t)),\quad  j=1,\ldots,m,\\
(Lv)_j(x,t):= \displaystyle c_j(1,x,t)\sum_{k=1}^m r_{jk}(\om_j(1,x,t))v_k(1,\om_j(1,x,t)),\quad j=m+1,\ldots,n.
\end{array}
\ee
Then the operator $C$ is defined as
\beq
\label{KL1}
Cu:=(Kw,Lv) \mbox{ for } u=(v,w).
\ee
The operators $D$ and $F$ are given by
\begin{eqnarray}
\label{Ddef}
&&(Du)_j(x,t):=-\int_{x_j}^x d_j(\xi,x,t)\sum_{k=1\atop k\not=j}^n b_{jk}(\xi,\om_j(\xi,x,t))u_k(\xi,\om_j(\xi,x,t))d\xi, \\
\label{Fdef}
&&(Ff)_j(x,t):=
\displaystyle  \int_{x_j}^x d_{j}(\xi,x,t)f_j(\xi,\om_j(\xi,x,t))d\xi.
\end{eqnarray}

\begin{lem}
\label{lem:iso}
Suppose 
\reff{eq:t8}. Then $I-C$ is bijective from $\CC_n$ to $\CC_n$.
\end{lem}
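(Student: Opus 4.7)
The plan is to exploit the block structure of $C$. Writing $u=(v,w)\in\CC_m\times\CC_{n-m}$, the definitions \reff{Cdef}, \reff{KL1} give $Cu=(Kw,Lv)$, so $C$ is purely off-diagonal. Consequently
\[
C^2(v,w)=C(Kw,Lv)=(KLv,\,LKw),
\]
i.e.\ $C^2$ is block diagonal. This is the observation on which the whole argument hinges.

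Next I would estimate $\|K\|$ and $\|L\|$ directly from \reff{Cdef}. For $j=1,\ldots,m$ and any $w\in\CC_{n-m}$,
\[
|(Kw)_j(x,t)|\le |c_j(0,x,t)|\sum_{k=m+1}^n|r_{jk}(\om_j(0,x,t))|\,\|w\|_\infty = R_j^0(x,t)\,\|w\|_\infty,
\]
so by \reff{S0def} we get $\|Kw\|_\infty\le S^0\|w\|_\infty$. The analogous estimate for $L$ gives $\|Lv\|_\infty\le T^0\|v\|_\infty$. Combining these with the formula for $C^2$ yields
\[
\|C^2u\|_\infty=\max(\|KLv\|_\infty,\|LKw\|_\infty)\le S^0T^0\,\|u\|_\infty,
\]
so by \reff{eq:t8}, $C^2$ is a strict contraction on $\CC_n$.

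Now I would invoke the standard Neumann series argument. Since $\|C^2\|\le S^0T^0<1$, the operator $I-C^2$ is an isomorphism on $\CC_n$ with inverse $\sum_{k=0}^\infty C^{2k}$. Because $C$ commutes with itself, $I-C^2=(I-C)(I+C)=(I+C)(I-C)$, and $(I\pm C)$ commute with the (operator-norm convergent) series $\sum_{k=0}^\infty C^{2k}$. Hence
\[
(I-C)\Bigl[(I+C)\sum_{k=0}^\infty C^{2k}\Bigr]=(I-C^2)\sum_{k=0}^\infty C^{2k}=I,
\]
and symmetrically on the other side, so $I-C$ is an isomorphism with inverse $\sum_{k=0}^\infty C^k$.

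There is really no serious obstacle here: the whole proof rests on noticing that $C$ interchanges the two factors of $\CC_m\times\CC_{n-m}$, which forces the norm of $C^2$ (rather than of $C$ itself) to be controlled by $S^0T^0$. Note that $\|C\|$ could well exceed $1$, so one cannot apply Neumann series to $C$ directly; the mild subtlety is to use it for $C^2$ and then pass to $I-C$ via the factorization $(I-C)(I+C)=I-C^2$.
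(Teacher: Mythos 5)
Your proposal is correct and rests on exactly the same ingredients as the paper's proof: the off-diagonal block structure $C(v,w)=(Kw,Lv)$ and the estimates $\|K\|\le S^0$, $\|L\|\le T^0$, so that it is the product $KL$ (equivalently $C^2=\mathrm{diag}(KL,LK)$) rather than $C$ itself that is a contraction. The only difference is cosmetic: the paper eliminates $w=Lv+h$ to reduce to the fixed-point equation $v=KLv+(Kh+g)$, whereas you phrase the same reduction through the factorization $I-C^2=(I-C)(I+C)$ and a Neumann series for $C^2$.
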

\begin{proof}
Let $f=(g,h) \in \CC_n$ with $g \in \CC_m$ and $h \in \CC_{n-m}$ be arbitrary given. 
We have $u=Cu+f$ if and only if
$
v=Kw+g, w=Lv+h,
$
i.e., if and only if
$$
v=K(Lv+h)+g \,\,\mbox{ or }\,\,
 w=L(Kw+g)+h.
$$
Taking into account \reff{Cdef}, 
we  have $u=Cu+f$ if and only if
$$
v(1,t)=[K(Lv+h)+g](1,t) \,\,\mbox{ or }\,\,
w(0,t)=[L(Kw+g)+h](0,t).
$$
Hence, if for some $c<1$ 
\beq
\label{KL<1}
\max_{1 \le j \le m} \max_{t \in \R}|[KLv]_j(1,t)|\le c \max_{m+1 \le j \le n} \max_{t \in \R}|v_j(1,t)| 
\,\,\,\,\mbox{ for all } v\in \CC_{n-m}
\ee
or 
\beq
\label{LK<1}
\max_{m+1 \le j \le n} \max_{t \in \R}|[LKw]_j(0,t)|\le c \max_{1 \le j \le m} \max_{t \in \R}|w_j(0,t)| 
\,\,\,\,\mbox{ for all } w\in \CC_{m}
\ee
then $I-C$ is bijective from $\CC_n$ to $\CC_n$. 
We use \reff{eq:t8} and get 
$$
[KLv]_j(1,t)=c_j(0,1,t)\sum_{k=m+1}^nr_{jk}(\tau_j(0,1,t))[Lv]_k(0,\tau_j(0,1,t))
$$
and
$$
[Lv]_k(0,\tau_j(0,1,t))=c_k(1,0,\tau_j(0,1,t))\sum_{l=1}^mr_{kl}\left(\tau_k(1,0,\tau_j(0,1,t))\right)
v_l\left(1,\tau_k(1,0,\tau_j(0,1,t))\right).
$$
This yields  \reff{KL<1} with $c=S^0$. Similarly one shows  \reff{LK<1} with $c=R^0$.
Hence, assumption \reff{eq:t8} yields \reff{KL<1} or  \reff{LK<1}.
\end{proof}

\section{Fredholm property}\label{sec:fredh}
\renewcommand{\theequation}{{\thesection}.\arabic{equation}}
\setcounter{equation}{0}

In this section we assume \reff{ungl}, \reff{cass} and \reff{eq:t8} and prove  the assertions (i)--(iii) 
of Theorem~\ref{thm:Fredh}.

We have to show that the operator $I-C-D$ is Fredholm of index zero from $\CC_n$ to $\CC_n$.
Unfortunately, the operator $D$ is not compact from  $\CC_n$ to $\CC_n$, in general, because it is a partial integral operator (cf. \cite{Appell}).
But by Lemma~\ref{lem:iso},  the operator $I-C-D$ is Fredholm of index zero from $\CC_n$ to $\CC_n$
if and only if
\beq
\label{Fre}
I-(I-C)^{-1}D \mbox{ is Fredholm of index zero from } \CC_n \mbox{ to } \CC_n,
\ee
and for proving \reff{Fre} we use the following Fredholmness criterion (cf., e.g. \cite[Theorem XIII.5.2]{KA}):

\begin{lem}\label{thm:criter}
Let $W$ be a Banach space, $I$ the identity in $W$, and $A:W \to W$ a linear bounded operator with $A^2$ being
compact.  Then $I+A$ is a Fredholm operator of index zero.
\end{lem}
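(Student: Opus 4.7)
The plan is to invoke Atkinson's characterization of Fredholm operators: a bounded linear operator $T$ on a Banach space is Fredholm if and only if it possesses a (two-sided) parametrix, i.e.\ a bounded operator $B$ such that $BT-I$ and $TB-I$ are both compact. The key algebraic identity is
\[
(I+A)(I-A) = (I-A)(I+A) = I - A^2,
\]
which, combined with the hypothesis that $A^2$ is compact, shows at once that $B := I-A$ is simultaneously a left and right parametrix for $I+A$. By Atkinson's theorem, $I+A$ is therefore Fredholm.

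For the index, I would use a continuous homotopy argument. Consider the family $T_t := I + tA$ for $t \in [0,1]$. The same identity, applied to $tA$ in place of $A$, gives
\[
T_t(I-tA) = (I-tA)T_t = I - t^2 A^2,
\]
and since $t^2 A^2$ is compact for every $t$, each $T_t$ is Fredholm by the same parametrix argument. The map $t \mapsto T_t$ is continuous in the operator norm on $\LL(W)$, and the Fredholm index is a locally constant (integer-valued, norm-continuous) function on the open set of Fredholm operators. Hence $t \mapsto \ind T_t$ is constant on $[0,1]$, yielding
\[
\ind(I+A) = \ind T_1 = \ind T_0 = \ind I = 0.
\]

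There is no real obstacle here: the argument is a textbook consequence of the Riesz--Schauder theory. The only point requiring minor care is that the parametrix identity holds \emph{uniformly} along the homotopy (with $I - tA$ as parametrix for $T_t$), so the homotopy stays inside the set of Fredholm operators, legitimizing the invariance-of-index step. If one prefers to avoid homotopy invariance, an equivalent route is to note directly that $\ker(I+A) \subseteq \ker(I-A^2)$ is finite-dimensional, that $\im(I+A) \supseteq \im(I-A^2)$ has finite codimension and is closed (as the preimage of a closed subspace under $I-A$ applied the right way, or by standard closed-range arguments for parametrix operators), and then to compute the index by comparing with $\ind(I-A^2) = 0$ using the multiplicativity $\ind((I+A)(I-A)) = \ind(I+A) + \ind(I-A)$ together with the symmetric roles of $I+A$ and $I-A$.
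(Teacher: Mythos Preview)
Your proof is correct. The parametrix identity $(I+A)(I-A)=(I-A)(I+A)=I-A^2$ together with compactness of $A^2$ immediately yields Fredholmness via Atkinson's theorem, and your homotopy $T_t=I+tA$ (each $T_t$ having parametrix $I-tA$ with compact defect $t^2A^2$) legitimately connects $I+A$ to the identity inside the Fredholm operators, so $\ind(I+A)=\ind I=0$. The alternative route you sketch via multiplicativity of the index is also valid, though to conclude $\ind(I+A)=\ind(I-A)$ from $\ind(I+A)+\ind(I-A)=0$ you would still need a symmetry argument (e.g.\ extending the homotopy parameter to $[-1,1]$), so the homotopy version is the cleaner one.

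As for comparison with the paper: the paper does not give its own proof of this lemma at all. It simply states the criterion and cites \cite[Theorem XII.5.2]{KA} (Kantorovich--Akilov). Your argument thus supplies what the paper leaves to the reference, and does so by exactly the standard Riesz--Schauder/Atkinson route one would expect.
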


Now, for \reff{Fre} it is sufficient to show that the operator $(I-C)^{-1}D(I-C)^{-1}D$
is compact from $\CC_n$ to $\CC_n$, i.e., that
\beq
\label{comp}
D(I-C)^{-1}D  \mbox{ is compact from } \CC_n   \mbox{ to } \CC_n.
\ee
Because of $D(I-C)^{-1}D=D^2+DC(I-C)^{-1}D$, the statement \reff{comp} will be proved if we show that
\beq
\label{Fr11}
D^2 \mbox{ and } DC  \mbox{ are compact from } \CC_n \mbox{ to } \CC_n.
\ee

Let us denote by $\CC_n^1$ the Banach space of all $u \in \CC_n$, which are $C^1$-smooth, with the norm
$\|u\|_\infty+\|\d_xu\|_\infty+\|\d_tu\|_\infty$. By the Arcela-Ascoli theorem, $\CC_n^1$ is compactly embedded into $\CC_n$. Hence, 
for \reff{Fr11} it  suffices to show that
\beq
\label{Fr2}
D^2 \mbox{ and } DC  \mbox{ map } \CC_n \mbox{ continuously into } \CC_n^1.
\ee

The definitions \reff{Cdef}, \reff{KL1} and \reff{Ddef} imply that for all 
$u \in \CC_n^1$ we have  $\d_xu,\d_tu \in \CC_n^1$ 
and
\beq
\label{d}
\begin{array}{ll}
\d_xCu=C_{11}u + C_{12}\d_tu, & \d_tCu=C_{21}u + C_{22}\d_tu,\\
\d_xDu=D_{11}u + D_{12}\d_tu, & \d_tDu=D_{21}u + D_{22}\d_tu
\end{array}
\ee
with linear bounded operators $C_{jk}, D_{jk}: \CC_n \to \CC_n$, which are defined by
$$
(C_{11}u)_j(x,t):=\left\{
\begin{array}{ll}
\displaystyle \sum_{k=m+1}^n \d_xc_{jk}(x,t)u_k(0,\om_j(0,x,t)) & \mbox{for } j=1,\ldots,m,\\
\displaystyle \sum_{k=1}^m \d_xc_{jk}(x,t)u_k(1,\om_j(1,x,t)) & \mbox{for } j=m+1,\ldots,n,
\end{array}
\right.
$$
$$
(C_{12}u)_j(x,t):=\left\{
\begin{array}{ll}
\displaystyle \sum_{k=m+1}^n \d_x\om_j(0,x,t)c_{jk}(x,t)u_k(0,\om_j(0,x,t)) & \mbox{for } j=1,\ldots,m,\\
\displaystyle \sum_{k=1}^m \d_x\om_j(1,x,t)c_{jk}(x,t)u_k(1,\om_j(1,x,t)) & \mbox{for } j=m+1,\ldots,n,
\end{array}
\right.
$$
$$
(C_{21}u)_j(x,t):=\left\{
\begin{array}{ll}
\displaystyle \sum_{k=m+1}^n \d_tc_{jk}(x,t)u_k(0,\om_j(0,x,t)) & \mbox{for } j=1,\ldots,m,\\
\displaystyle \sum_{k=1}^m \d_tc_{jk}(x,t)u_k(1,\om_j(1,x,t)) & \mbox{for } j=m+1,\ldots,n,
\end{array}
\right.
$$
$$
(C_{22}u)_j(x,t):=\left\{
\begin{array}{ll}
\displaystyle \sum_{k=m+1}^n \d_t\om_j(0,x,t)c_{jk}(x,t)u_k(0,\om_j(0,x,t)) & \mbox{for } j=1,\ldots,m,\\
\displaystyle \sum_{k=1}^m \d_t\om_j(1,x,t)c_{jk}(x,t)u_k(1,\om_j(1,x,t)) & \mbox{for } j=m+1,\ldots,n
\end{array}
\right.
$$
and 
\begin{eqnarray*}
&&(D_{11}u)_j(x,t):=
\sum_{k=1\atop k\not=j}^n\int_{x_j}^x \d_xd_{jk}(\xi,x,t)u_k(\xi,\om_j(\xi,x,t))d\xi,\\
&&(D_{12}u)_j(x,t):=\sum_{k=1\atop k\not=j}^n\int_{x_j}^x \d_x\om_j(\xi,x,t)d_{jk}(\xi,x,t)u_k(\xi,\om_j(\xi,x,t))d\xi,\\
&&(D_{21}u)_j(x,t):= \sum_{k=1\atop k\not=j}^n\int_{x_j}^x \d_td_{jk}(\xi,x,t)u_k(\xi,\om_j(\xi,x,t))d\xi, \\
&&(D_{22}u)_j(x,t):= \sum_{k=1\atop k\not=j}^n\int_{x_j}^x \d_t\om_j(\xi,x,t)d_{jk}(\xi,x,t)u_k(\xi,\om_j(\xi,x,t))d\xi.
\end{eqnarray*}
Here
\beq
\label{cjkdef}
c_{jk}(x,t):=\left\{
\begin{array}{ll}
\displaystyle r_{jk}(\om_j(0,x,t)) c_j(0,x,t) & \mbox{for } j=1,\ldots,m,\; k=m+1,\ldots,n,\\
\displaystyle  r_{jk}(\om_j(1,x,t)) c_j(1,x,t) & \mbox{for } j=m+1,\ldots,n, \; k=1,\ldots,m
\end{array}
\right.
\ee
and 
\beq
\label{djkdef}
d_{jk}(\xi,x,t):=
-d_j(\xi,x,t) b_{jk}(\xi,\om_j(\xi,x,t)).
\ee
By \reff{d} we get for all $u \in \CC_n^1$ 
$$
\begin{array}{ll}
\d_xD^2u=D_{11}Du + D_{12}(D_{11}u+D_{12}\d_tu), & \d_tD^2u=D_{21}Du + D_{22}(D_{21}u+D_{22}\d_tu),\\
\d_xDCu=D_{11}Cu + D_{12}(C_{11}u+C_{12}\d_tu), & \d_tDCu=D_{21}Cu + D_{22}(C_{21}u+C_{22}\d_tu).
\end{array}
$$
Now, taking into account the density of $\CC_n^1$ in $\CC_n$, in order to show \reff{Fr11} 
it suffices to prove the following statement:
\begin{lem}\label{Ck}
There exists a positive constant such that  for all $u \in \CC_n^1$ we have
$$
\|D_{12}^2\d_tu\|_\infty+\|D_{22}^2\d_tu\|_\infty+\|D_{12}C_{12}\d_tu\|_\infty+\|D_{22}C_{22}\d_tu\|_\infty
\le \mbox{\rm const } \|u\|_\infty.
$$
\end{lem}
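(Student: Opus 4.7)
The plan is to exploit assumption \reff{cass} to rewrite every factor $\d_t u_l$ appearing inside the four integrands as a total $\xi$-derivative of $u_l$ (where $\xi$ is the outer integration variable of each composition), after which a single integration by parts in $\xi$ yields the bound. The essential mechanism is a cancellation of the factor $a_k-a_j$, which arises simultaneously in the outer kernel (via \reff{cass}) and in a Jacobian coming from the composition of characteristics.

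The algebraic step is a variational identity for the characteristic map. Differentiating $\om_k(\xi,\xi,s)=s$ in $\xi$ gives $\d_2\om_k(\xi,\xi,s) = -1/a_k(\xi,s)$, while $\d_2\om_k(\sigma,\xi,s)$ and $\d_3\om_k(\sigma,\xi,s)$ satisfy the same linear variational ODE in $\sigma$, whence
\[
\d_2\om_k(\sigma,\xi,s) = -\frac{1}{a_k(\xi,s)}\,\d_3\om_k(\sigma,\xi,s) \quad \mbox{for all } \sigma.
\]
Combined with $\d_\xi\om_j(\xi,x,t) = 1/a_j(\xi,\om_j)$, this yields
\[
\d_\xi\bigl[\om_k(\sigma,\xi,\om_j(\xi,x,t))\bigr] = \d_3\om_k(\sigma,\xi,\om_j)\cdot\frac{a_k-a_j}{a_j a_k}\bigg|_{(\xi,\om_j)} = (a_k-a_j)(\xi,\om_j)\cdot G(\sigma,\xi,x,t),
\]
with $G$ bounded, $C^1$-smooth, and nonvanishing by \reff{ungl}.

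By \reff{cass} the outer kernels $\d_x\om_j\,d_{jk}$ (for $D_{12}$) and $\d_t\om_j\,d_{jk}$ (for $D_{22}$) contain the factor $b_{jk}(\xi,\om_j) = \tilde b_{jk}(\xi,\om_j)(a_k-a_j)(\xi,\om_j)$, so the quotient (outer kernel)$/\d_\xi[\om_k(\sigma,\xi,\om_j)]$ extends to a bounded $C^1$ function on $[0,1]\times\R$. Writing $\tau(\xi) := \om_k(\sigma,\xi,\om_j(\xi,x,t))$ and using $\d_t u_l(\sigma,\tau) = (\d_\xi\tau)^{-1}(d/d\xi)[u_l(\sigma,\tau(\xi))]$, each of the four integrands becomes (bounded $C^1$ coefficient)$\,\cdot\,(d/d\xi)[u_l(\sigma,\tau)]$. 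For the compositions $D_{12}C_{12}\d_tu$ and $D_{22}C_{22}\d_tu$ (where $\sigma=x_k$ is a constant) a single integration by parts in $\xi$ over $[x_j,x]$ is immediate, producing boundary values of $u_l$ and a regular $u_l$-integral, both bounded by $\mbox{\rm const}\,\|u\|_\infty$. For the doubly-iterated $D_{12}^2\d_tu$ and $D_{22}^2\d_tu$ I first apply Fubini so that for each fixed $\eta$ the variable $\xi$ runs over an interval depending on $\eta$, and then perform the same integration by parts; the boundary term at $\xi=\eta$ is regular because $\om_k(\eta,\eta,\om_j(\eta,x,t)) = \om_j(\eta,x,t)$.

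The main obstacle is the careful derivation and uniform $C^1$-bounding of the quotient (outer kernel)$/\d_\xi\tau$ and its $\xi$-derivative. In particular, the apparent singularity at points where $a_k=a_j$ must be removed by the matching zero provided by \reff{cass}. Once this cancellation is made explicit at the level of the kernels, everything else is controlled by $C^1$-smoothness of the data $a_j, b_{jk}, \tilde b_{jk}, r_{jk}$ and the uniform bounds on the Jacobian factors.
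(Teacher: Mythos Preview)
Your proposal is correct and follows essentially the same approach as the paper: both exploit the chain-rule identity $\d_\xi\bigl[\om_k(\sigma,\xi,\om_j(\xi,x,t))\bigr]=\d_t\om_k\cdot(a_k-a_j)/(a_ja_k)$ together with assumption \reff{cass} to convert the factor $b_{jk}\,\d_tu_l$ into a bounded $C^1$ coefficient times $\frac{d}{d\xi}u_l(\sigma,\om_k(\sigma,\xi,\om_j))$, and then apply Fubini (for the $D^2$ terms) followed by integration by parts in $\xi$. The only cosmetic difference is that the paper writes the resulting identity as an explicit product equality $d_{jkl}\,b_{jk}\,\d_tu_l=\tilde d_{jkl}\,\tilde b_{jk}\,\frac{d}{d\xi}u_l$ rather than speaking of a quotient that extends smoothly across the set $\{a_j=a_k\}$.
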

\begin{proof}
For any $j=1,\ldots,n$ and $u \in \CC_n^1$ we have
\begin{eqnarray}
\!\!\!\!\!\!\!\!\!\!\!\!\!\!&&(D_{12}^2\d_tu)_j(x,t)\nonumber\\
\!\!\!\!\!\!\!\!\!\!\!\!\!\!&&=\sum_{k=1\atop k\not=j}^n
\sum_{l=1\atop l\not=k}^n
\int_{x_j}^x\int_{x_k}^\xi d_{jkl}(\xi,\eta,x,t)b_{jk}(\xi,\om_j(\xi,x,t))\d_tu_l(\eta,\om_k(\eta,\xi,\om_j(\xi,x,t)))  d\eta d\xi
\label{D^2}
\end{eqnarray}
with
$$
d_{jkl}(\xi,\eta,x,t)
:=\d_t\om_j(\xi,x,t)\d_t\om_k(\eta,\xi,\om_j(\xi,x,t)))
d_{j}(\xi,x,t)d_{kl}(\eta,\xi,\om_j(\xi,x,t)).
$$
On the other hand, \reff{char}, \reff{dx} and \reff{dt} imply (for all $\xi,\eta,x \in [0,1]$ 
and $t \in \R$ with $\d_tu_l(\eta,\om_k(\eta,\xi,\om_j(\xi,x,t))) \not=0$)
\begin{eqnarray}
\lefteqn{
\frac{\frac{d}{d\xi}u_l(\eta,\om_k(\eta,\xi,\om_j(\xi,x,t)))}{\d_tu_l(\eta,\om_k(\eta,\xi,\om_j(\xi,x,t)))}}\nonumber\\
&&=\d_x\om_k(\eta,\xi,\om_j(\xi,x,t))+\d_t\om_k(\eta,\xi,\om_j(\xi,x,t))\d_\xi\om_j(\xi,x,t)\nonumber\\
&&=\d_t\om_k(\eta,\xi,\om_j(\xi,x,t))
\left(\frac{1}{a_j(\xi,\om_j(\xi,x,t))}-\frac{1}{a_k(\xi,\om_j(\xi,x,t))}\right).
\label{djkbruch}
\end{eqnarray}
Hence, \reff{cass}, \reff{dx} and \reff{dt} 
yield that for all $\xi,\eta,x \in [0,1]$ and $t \in \R$ it holds
\begin{eqnarray}
&&d_{jkl}(\xi,x,t)b_{jk}(\xi,\om_j(\xi,x,t))\d_tu_l(\eta,\om_k(\eta,\xi,\om_j(\xi,x,t)))\nonumber\\
&&=\tilde{d}_{jkl}(\xi,\eta,x,t)\tilde{b}_{jk}(\xi,\om_j(\xi,x,t))
\frac{d}{d\xi}u_l(\eta,\om_k(\eta,\xi,\om_j(\xi,x,t)))
\label{tilde}
\end{eqnarray}
with
$$
\tilde{d}_{jkl}(\xi,\eta,x,t):=a_j(\xi,\om_j(\xi,x,t))a_k(\xi,\om_j(\xi,x,t))\d_t\om_j(\xi,x,t)d_j(\xi,x,t)d_{kl}(\eta,\xi,\om_j(\xi,x,t)).
$$
Remark that the values $\tilde{b}_{jk}(x,t)$ are not uniquely defined for $(x,t)$ with $a_j(x,t)=a_k(x,t)$ by the condition \reff{cass},
but, anyway, the right-hand side (and, hence, the left-hand side) of \reff{tilde} does not depend on the choice of  $\tilde{b}_{jk}$ because 
$\frac{d}{d\xi}u_l(\eta,\om_k(\eta,\xi,\om_j(\xi,x,t)))=0$ if $a_j(x,t)=a_k(x,t)$ (cf. \reff{djkbruch}).

Let us check if for all $j\not=k$ and $k\not=l$ the partial derivatives $\d_\xi \tilde{d}_{jkl}$ exist and are continuous: For the factors $a_j(\xi,\om_j(\xi,x,t))$ and $a_k(\xi,\om_j(\xi,x,t))$
this is the case because $a_j$, $a_k$, and $\om_j$ are $C^1$-smooth.
For the factor $\d_t\om_j(\xi,x,t)$
this is the case because $\d_x\om_j$ is $C^1$-smooth (cf.  \reff{ungl} and \reff{dx}).  
Finally, for the factors  $d_j(\xi,x,t)$ and $d_{jk}(\eta,\xi,\tau_j(\xi,x,t))$ 
this follows from \reff{ddef} and \reff{djkdef}. 

Applying Fubini's theorem and integration by parts in \reff{D^2}, we get, for example, 
for the terms  with $1 \le j,k \le m$
\begin{eqnarray*}
\lefteqn{
\left|\int_0^x\int_0^\xi d_{jkl}(\xi,\eta,x,t)b_{jk}(\xi,\om_j(\xi,x,t))\d_tu_l(\eta,\om_k(\eta,\xi,\om_j(\xi,x,t))) d\eta d\xi \right|}\\
&&=\left|\int_0^x\int_\eta^x \tilde{d}_{jkl}(\xi,\eta,x,t)\tilde{b}_{jk}(\xi,\om_j(\xi,x,t))
\frac{d}{d\xi}u_l(\eta,\om_k(\eta,\xi,\om_j(\xi,x,t))) d\xi d\eta\right|\\
&&\le\left|\int_0^x\int_\eta^x \frac{d}{d\xi} \left(\tilde{d}_{jkl}(\xi,\eta,x,t)\tilde{b}_{jk}(\xi,\om_j(\xi,x,t))\right)
u_l(\eta,\om_k(\eta,\xi,\om_j(\xi,x,t))) d\xi d\eta \right|\\ 
&&+\left|\int_0^x\left[\tilde{d}_{jkl}(\xi,\eta,x,t)\tilde{b}_{jk}(\xi,\om_j(\xi,x,t))
u_l(\eta,\om_k(\eta,\xi,\om_j(\xi,x,t))\right]_{\xi=\eta}^{\xi=x}d\eta\right|\\
&&\le \mbox{ const } \|u\|_\infty.
\end{eqnarray*}
Similarly one can handle the other terms in \reff{D^2}. Consequently, we get
$\|D_{12}^2\d_tu\|_\infty \le \mbox{const } \|u\|_\infty$. The estimate $\|D_{22}^2\d_tu\|_\infty \le \mbox{const } \|u\|_\infty$
can be proved in an analogous way.

Further, for any $u \in \CC_n^1$ we have
(using the notation \reff{x_j})
$$
(D_{22}C_{22}\d_tu)_j(x,t)=
\sum_{k=1\atop k\not=j}^n  \sum_{l=1}^n \int_{x_j}^x 
e_{jkl}(\xi,x,t)b_{jk}(\xi,\om_j(\xi,x,t))\d_tu_l(x_j,\om_k(x_j,\xi,\om_j(\xi,x,t))) d\xi
$$
with
\begin{eqnarray*}
\lefteqn{
e_{jkl}(\xi,x,t)}\\
&&:=
\left\{
\begin{array}{l}
-\d_t\om_j(\xi,x,t)\d_t\om_k(1,\xi,\om_j(\xi,x,t))d_{j}(\xi,x,t)c_{kl}(\xi,\om_j(\xi,x,t)) \mbox{ for } l=1,\ldots m,\\
-\d_t\om_j(\xi,x,t)\d_t\om_k(0,\xi,\om_j(\xi,x,t))d_{j}(\xi,x,t)c_{kl}(\xi,\om_j(\xi,x,t)) \mbox{ for } l=m+1,\ldots n.
\end{array}
\right.
\end{eqnarray*}
Using \reff{djkbruch}, we get for $l=1,\ldots,m$
\begin{eqnarray*}
&&e_{jkl}(\xi,x,t)b_{jk}(\xi,\om_j(\xi,x,t))\d_tu_l(1,\om_k(1,\xi,\om_j(\xi,x,t)))\\
&&=\tilde{e}_{jkl}(\xi,x,t)\tilde{b}_{jk}(\xi,\om_j(\xi,x,t))\frac{d}{d\xi}u_l(1,\om_k(1,\xi,\om_j(\xi,x,t)))
\end{eqnarray*}
with
$$
\tilde{e}_{jkl}(\xi,\eta,x,t):=-a_j(\xi,\om_j(\xi,x,t))a_k(\xi,\om_j(\xi,x,t))\d_t\om_j(\xi,x,t)d_j(\xi,x,t)c_{kl}(\eta,\xi,\om_j(\xi,x,t)).
$$
Hence, we can integrate by parts and get
\begin{eqnarray*}
\lefteqn{
\left|\int_{x_j}^x  e_{jkl}(\xi,x,t)b_{jk}(\xi,\om_j(\xi,x,t))\d_tu_l(1,\om_k(1,\xi,\om_j(\xi,x,t))) d\xi\right|}\\
&&=\left|\int_{x_j}^x \tilde{e}_{jkl}(\xi,x,t)\tilde{b}_{jk}(\xi,\om_j(\xi,x,t))\frac{d}{d\xi}u_l(1,\om_k(1,\xi,\om_j(\xi,x,t))) d\xi\right|\\
&&\le \mbox{ const } \|u\|_\infty.
\end{eqnarray*} 
Similarly one can proceed in the case $l=m+1,\ldots,n$. This way we come to the estimate
$\|D_{22}C_{22}\d_tu\|_\infty \le \mbox{const } \|u\|_\infty$.
To get the same upper bound for the remaining term 
$D_{12}C_{12}\d_tu$,
we follow the same line. The proof is therewith complete.
\end{proof}

\begin{rem}
{\bf about smoothness assumptions on the coefficients \boldmath$b_{jk}$\unboldmath}\rm\ 
Concerning the regularity assumptions on the coefficients $b_{jk}$ with $j \not=k$,
we only used (see  the proof of Lemma \ref{Ck}) that
\beq
\label{R1}
\left|\int_{x_j}^x \tilde{b}_{jk}(\xi,\om_j(\xi,x,t))\frac{d}{d\xi}u_l(1,\om_k(1,\xi,\om_j(\xi,x,t))) d\xi\right|
\le \mbox{ const } \|u\|_\infty \mbox{ for all } u \in \CC_n^1.
\ee
This means that the assumption $\tilde{b}_{jk} \in \CC_n^1$ (cf. \reff{cass}) is sufficient, but not necessary. 
For example,
if $a_j$, $a_k$ and $b_{jk}$ and, hence,  $\tilde{b}_{jk}$ are $t$-independent, then for \reff{R1} it is sufficient that
$\tilde{b}_{jk} \in BV(0,1)$.
\end{rem}

\section{Solution regularity}\label{sec:smoothdep1}

\renewcommand{\theequation}{{\thesection}.\arabic{equation}}
\setcounter{equation}{0}
In this section we assume \reff{ungl}, \reff{cass}, \reff{eq:t8} and \reff{eq:t81} and 
prove  the assertions (iv) and (v) of Theorem~\ref{thm:Fredh}, similarly to \cite{kmit}. 
Remark that  \reff{eq:t8} and \reff{eq:t81} are identical if all coefficients $a_j$ are $t$-independent.

To prove the assertion (iv), assume 
that the functions  
$f_j$ are continuously differentiable with respect to $t$.
Let $u$ be a continuous solution to   \reff{eq:t1}--\reff{eq:t3}. We have to 
show that the partial derivatives $\d_xu$ and $\d_tu$ exist and are continuous. 
For that it is sufficient to show that $\d_tu$ exists and is continuous, since then \reff{rep1} and \reff{rep2} imply that also
$\d_xu$ exists and is continuous.

Because of \reff{abstr} we have 
\beq
\label{51}
(I-C)u=D(C+D)u+(I+D)Ff.
\ee
Denote by $\tilde{\CC}_n^1$ the subspace of all $v \in \CC_n$ such that the partial derivative $\d_tv$ exists and is continuous. 
By assumption, $f \in \tilde{\CC}_n^1$. Moreover, by \reff{Ddef} and \reff{Fdef}, the operators $D$ and $F$
map  $\tilde{\CC}_n^1$ into  $\tilde{\CC}_n^1$. Therefore, \reff{Fr2} implies that the right-hand side of 
\reff{51} belongs to   $\tilde{\CC}_n^1$.
Hence, it remains to prove the following fact:
\begin{lem}
\label{5.1}
If for $\tilde{u} \in \CC_n$ and $\tilde{f} \in \tilde{\CC}_n^1$ it holds $\tilde{u}=C\tilde{u}+\tilde{f}$, then $\tilde{u} \in \tilde{\CC}_n^1$.
\end{lem}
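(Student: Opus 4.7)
The approach is to expand $\tilde u$ as a Neumann series in $C$ and to differentiate it termwise in $t$. Thanks to the block structure $C(v,w)=(Kw,Lv)$ exploited in Lemma~\ref{lem:iso}, one has $\|C^{2k}u\|_\infty\le(S^0T^0)^k\|u\|_\infty$ for every $u\in\Cv$, so that by \reff{eq:t8}
$$
\tilde u=(I-C)^{-1}\tilde f=\sum_{k=0}^\infty C^k\tilde f
$$
with absolute convergence in $\Cv$. The task then reduces to showing that every summand lies in $\tilde{\CC}_n^1$ and that the series of $t$-derivatives of the summands also converges uniformly, so that the classical termwise differentiation theorem will give $\tilde u\in\tilde{\CC}_n^1$.

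Since $c_j$, $r_{jk}$ and the characteristics $\om_j$ are $C^1$, a direct differentiation of \reff{Cdef} shows that $C$ preserves $\tilde{\CC}_n^1$ and that, for every $u\in\tilde{\CC}_n^1$,
$$
\d_t(Cu)=\tilde C(\d_tu)+Mu,
$$
where $\tilde C$ is obtained from $C$ by inserting the extra factor $\d_t\om_j(x_j,x,t)$ into each reflection sum, and $M:\Cv\to\Cv$ is a bounded operator collecting the $t$-derivatives of the prefactors $c_j(x_j,x,t)r_{jk}(\om_j(x_j,x,t))$. The explicit formula \reff{dt} for $\d_t\om_j$ identifies this additional factor with exactly the extra exponential appearing in the definition of $R^1_j$. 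Repeating the estimates \reff{Knorm} and \reff{Lnorm} with $R^1$ in place of $R^0$ shows that $\tilde C$ has the same block form as $C$ and satisfies $\|\tilde C^{2k}u\|_\infty\le(S^1T^1)^k\|u\|_\infty$, so that \reff{eq:t81} yields $\sum_k\|\tilde C^k\|_{\LL(\Cv;\Cv)}<\infty$.

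Applying the identity $\d_tC=\tilde C\,\d_t+M$ inductively gives, for $k\ge1$,
$$
\d_t(C^k\tilde f)=\tilde C^k(\d_t\tilde f)+\sum_{i=0}^{k-1}\tilde C^{\,k-1-i}MC^i\tilde f.
$$
The first family sums to an element of $\Cv$ by \reff{eq:t81}, and the double sum is majorised by $\|M\|\bigl(\sum_k\|\tilde C^k\|\bigr)\bigl(\sum_i\|C^i\tilde f\|_\infty\bigr)$, which is finite by \reff{eq:t8} and \reff{eq:t81}. Hence the partial sums $\sum_{k=0}^N C^k\tilde f$ converge to $\tilde u$ uniformly while their $t$-partial derivatives also converge uniformly, and the standard theorem on interchange of uniform limit and partial derivative delivers $\tilde u\in\tilde{\CC}_n^1$.

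The main technical hurdle I anticipate is the bookkeeping required to establish the identity $\d_t(Cu)=\tilde C(\d_tu)+Mu$ together with the sharp norm bound $\|\tilde C^{2k}\|\le(S^1T^1)^k$: one must check that, after differentiating $u_k(x_j,\om_j(x_j,x,t))$, the combination of $\d_t\om_j$ with the prefactor $c_j(x_j,x,t)r_{jk}(\om_j(x_j,x,t))$ produces precisely the integrand entering the definition of $R^1_j$ above \reff{S1def}. Once this matching is verified, the rest of the argument is a routine summation.
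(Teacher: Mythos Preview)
Your argument is correct. The key identity $\partial_t(Cu)=\tilde C(\partial_tu)+Mu$ is, in the paper's notation from Section~\ref{sec:fredh}, precisely $\partial_tCu=C_{22}\partial_tu+C_{21}u$; the block structure of $\tilde C=C_{22}$ together with \reff{dt} gives exactly the bound $\|\tilde C^{2k}\|\le (S^1T^1)^k$ you need, and the ``technical hurdle'' you flag is genuinely routine once one matches the factor $\partial_t\om_j(x_j,x,t)$ with the extra exponential in the definition of $R^1_j$. The double sum $\sum_{k\ge 1}\sum_{i=0}^{k-1}\|\tilde C^{k-1-i}\|\,\|M\|\,\|C^i\tilde f\|_\infty$ is controlled by $(\sum_j\|\tilde C^j\|)(\sum_i\|C^i\|)$, which is finite since the odd powers are handled via $\|\tilde C^{2k+1}\|\le\|\tilde C\|(S^1T^1)^k$ and similarly for $C$.

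Your route, however, differs from the paper's. Instead of expanding $(I-C)^{-1}\tilde f$ as a Neumann series and differentiating termwise, the paper reduces to the $\CC_m$-component via $\tilde v=(I-KL)^{-1}(K\tilde h+\tilde g)$ and shows that $KL$ is a strict contraction on $\tilde{\CC}_m^1$ equipped with the weighted norm $\|v\|_\infty+\gamma\|\partial_tv\|_\infty$ for $\gamma$ small enough. The paper's choice of weighted norm absorbs the lower-order operator (your $M$, its $K_1L+K_2L_1$) into the contraction constant by taking $\gamma$ small, whereas you keep $M$ explicit and sum its contributions separately. The weighted-norm argument is somewhat cleaner to iterate when proving higher regularity (assertion~(v)), since one simply reinvokes the same lemma at each step rather than managing increasingly many cross-terms in the differentiated Neumann series; your approach has the advantage of yielding an explicit formula for $\partial_t\tilde u$.
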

\begin{proof}
We proceed as in the proof of Lemma \ref{lem:iso}. In particular, we use the Banach spaces $\CC_m$ and $\CC_{n-m}$ and the linear bounded operators
$K:\CC_{n-m} \to \CC_m$ and $L:\CC_{m} \to \CC_{n-m}$, which are introduced there. Further, by $\tilde{\CC}_m^1$ we denote the space of all
$v \in \CC_m$ such that the partial derivatives $\d_tv$ exist and are continuous. 
Similarly the space  $\tilde{\CC}_{n-m}^1$ is
introduced. 

Let $\tilde{u} \in \CC_n$ and $\tilde{f} \in \tilde{\CC}^1_n$ be given and satisfy
\beq 
\label{ab}
\tilde{u}=C\tilde{u}+\tilde{f}.
\ee 
Then $\tilde{u}=(\tilde{v},\tilde{w})$ with certain $\tilde{v} \in  \tilde{\CC}_m$ and  $\tilde{w} \in  \tilde{\CC}_{n-m}$,
$\tilde{f}=(\tilde{g},\tilde{h})$ with certain $\tilde{g} \in  \tilde{\CC}^1_m$ and  $\tilde{h} \in  \tilde{\CC}^1_{n-m}$,
$
\tilde{v}=K\tilde{w}+\tilde{g}, \mbox{ and } \tilde{w}=L\tilde{v}+\tilde{h}.
$
Hence,
$$
\tilde{v}(1,t)=[K(L\tilde{v}+\tilde{h})+\tilde{g}](1,t) \mbox{ and }
\tilde{w}(0,t)=[L(K\tilde{w}+\tilde{g})+\tilde{h}](0,t).
$$
We have to show that $\tilde{u} \in  \tilde{\CC}^1_n$, i.e., that $\tilde{v} \in  \tilde{\CC}^1_m$ and $\tilde{w} \in  \tilde{\CC}^1_{n-m}$.
Due to  \reff{Cdef}, this is equivalent to 
\beq
\label{glatt}
\tilde{v} \in  \tilde{\CC}^1_m \mbox{ or } \tilde{w} \in  \tilde{\CC}^1_{n-m}.
\ee
From \reff{Cdef}) it follows that \reff{glatt} is equivalent to 
\beq
\label{glatt1}
\tilde{v}(1,\cdot) \in  {C}^1_m(\R) \mbox{ or } \tilde{w}(0,\cdot) \in  {C}^1_{n-m}(\R),
\ee
where ${C}^1_m(\R)$ and $ {C}^1_{n-m}(\R)$ are the spaces of $2\pi$-periodic  continuously differentiable funtions from $\R$ into $\R^m$ and $\R^{n-m}$, respectively.
For any $\ga>0$, the space  ${C}_{m}^1(\R)$ is a  Banach spaces with the norm
$$
\|v\|_\ga:=\|v\|_\infty+\ga\|v'\|_\infty \mbox{ with }
\|v\|_\infty:=\max_{1\le j \le m}\max_{t \in \R}|v_j(t)|,
$$
and similarly for   ${C}_{n-m}^1(\R)$.
Hence, if for some $c<1$ and for some $\ga >0$ we have
\beq
\label{in}
\|[KLv](1,\cdot)\|_\ga \le c \|v(1,\cdot)\|_\ga
 \mbox{ for all } v\in \tilde\CC_{m}^1
\ee
or
\beq 
\label{in1}
\|[LKw](0,\cdot)\|_\ga \le c \|w(0,\cdot)\|_\ga
 \mbox{ for all } w\in \tilde\CC_{n-m}^1,
\ee
then $I-C$ is bijective from $\tilde\CC_n^1$ to $\tilde\CC_n^1$, as desired. 

Let us suppose that $R^0<1$ and $R^1<1$. Fix $c$ with
\beq
\label{Rc}
\max\{R^0,R^1\} < c < 1
\ee
and  prove \reff{in} (the proof of  \reff{in1} under the conditions $S^0<1$ and $S^1<1$ 
follows the same line).

Let us calculate $\d_tKLv$. Similarly to \reff{d}, we have for all $v \in \tilde{\CC}_m^1$ and 
$w \in \tilde{\CC}_{n-m}^1$ 
\beq
\label{KLeq}
\d_tKw=K_1w+K_2\d_tw, \quad\d_tLv=L_1v+L_2\d_tv
\ee
with linear bounded operators $K_1,K_2:\CC_{n-m}\to\CC_m$ and $L_1,L_2:\CC_m \to \CC_{n-m}$ defined as follows (cf. \reff{KL1}
and \reff{d}):
\begin{eqnarray*}
(K_1w)_j(x,t)& := &  \sum_{k=m+1}^n \d_tc_{jk}(x,t)w_k(0,\om_j(0,x,t)),\\
(K_2w)_j(x,t)& := &  \sum_{k=m+1}^n \d_t\om_j(0,x,t)c_{jk}(x,t)w_k(0,\om_j(0,x,t)),\\
(L_1v)_j(x,t)& := &  \sum_{k=1}^m \d_tc_{jk}(x,t)v_k(1,\om_j(1,x,t)),\\
(L_2v)_j(x,t)& := &  \sum_{k=1}^m \d_t\om_j(1,x,t)c_{jk}(x,t)v_k(1,\om_j(1,x,t)).
\end{eqnarray*}
Take  $v \in \tilde{\CC}_{n-m}^1$. From \reff{KLeq} it follows 
\begin{eqnarray*}
\lefteqn{
\|[\d_tKLv](1,\cdot)\|_\infty=\|[(K_1Lv+K_2(L_1v+L_2\d_tv)](1,\cdot)\|_\infty}\\
&&\le\|[(K_1L+K_2L_1)v](1,\cdot)\|_\infty
+\|[K_2L_2\d_tv](1,\cdot)\|_\infty.
\end{eqnarray*}
Moreover, using \reff{cdef}, \reff{dt} and \reff{cjkdef},  we get 
\begin{eqnarray*}
\lefteqn{
[K_2L_2\d_tv]_j(1,t)=\sum_{k=m+1}^n\sum_{l=1}^mr_{jk}(\sigma_j(0,t))r_{kl}(\sigma_{jk}(1,t))\d_tv_l(1,\sigma_{jk}(1,t))}\\
&&\times\exp\int_0^1\left(\frac{b_{kk}(\eta,\sigma_{jk}(\eta,t))}{a_k(\eta,\sigma_{jk}(\eta,t))}-\frac{b_{jj}(\eta,\sigma_{j}(\eta,t))}{a_j(\eta,\sigma_{j}(\eta,t))}\right)d\eta\\
&&\times\exp\int_0^1\left(\frac{\d_ta_j(\eta,\sigma_{j}(\eta,t))}{a_j(\eta,\sigma_{j}(\eta,t))^2}-\frac{\d_ta_k(\eta,\sigma_{jk}(\eta,t))}{a_k(\eta,\sigma_{jk}(\eta,t))^2}
\right)d\eta
\end{eqnarray*}
with
$
\sigma_j(\eta,t):=\tau_j(\eta,1,t),\; \sigma_{jk}(\eta,t):=\tau_k(\eta,0,\tau_j(0,1,t)).
$
Hence, we get
\beq
\label{510}
\|[\d_tKLv](1,\cdot)\|_\infty \le \|[K_1Lv+K_2L_1v](1,\cdot)\|_\infty+R^1\|\d_tv(1,\cdot)\|_\infty.
\ee
Moreover, in  the proof of Lemma \ref{lem:iso} we showed that
$$
\left\|[KLv](1,\cdot)\right\|_\infty \le R^0\|v(1,\cdot)\|_\infty.
$$
Finally, choose $\ga$ so small that 
$$
\|[(K_1L+K_2L_1)v](1,\cdot)\|_{\infty}\le \frac{c-R^0}{\ga}\|v(1,\cdot)\|_\infty
\quad \mbox{ for all } v \in \tilde{\CC}_m^1. 
$$
Then  \reff{Rc} and \reff{510}  imply \reff{in}.
\end{proof}
The proof of the  assertion (iv)  of Theorem~\ref{thm:Fredh} is therewith complete.\\

To prove the assertion (v) of Theorem~\ref{thm:Fredh}, suppose that all coefficients $a_j$ are $t$-independent. 
Then \reff{dt} yields that $\d_t\tau_j(\xi,x,t)=1$.
Therefore in \reff{d} we have 
\beq
\label{CD}
C_{22}=C \mbox{ and }  D_{22}=D. 
\ee

Let $u$  be a continuous solution   to  \reff{eq:t1}--\reff{eq:t3}, i.e., a solution to \reff{rep1}--\reff{rep2}, and suppose that all functions 
$a_j, b_{jk},f_j$ and $r_{jk}$ are $C^\infty$-smooth. 

First we show by induction that all partial derivatives $\d_t^ku, k=1,2,\ldots$
exist and are continuous.

For $k=1$ this follows from assertion (iv) of Theorem~\ref{thm:Fredh}.

Now suppose that all  partial derivatives $\d_tu,\ldots,\d_t^ku$ exist and are continuous. Then as in  \reff{d} one gets (cf. \reff{CD})
$$
\d_t^kCu=\sum_{j=0}^{k-1}C_j\d_t^ju+C\d_t^ku, \; \d_t^kDu=\sum_{j=0}^{k-1}D_j\d_t^ju+D\d_t^ku
$$
with linear bounded operators $C_j,D_j:\CC_n \to \CC_n$ such that $C_jv ,D_jv \in \tilde{\CC}_n^1$ for all $ v \in  \tilde{\CC}_n^1$. 
Hence, from 
$(I-C-D)u=Ff$ it follows
$$
(I-C-D)\d_t^ku=\d_t^kFf-\sum_{j=0}^{k-1}\left(C_j\d_t^ju+D_j\d_t^ju\right)=:R_k \in \tilde{\CC}_n^1
$$
and, consequently,
$$
(I-C)\d_t^ku=D\left((C+D)\d_t^ku+R_k\right)+R_k  \in \tilde{\CC}_n^1.
$$
By Lemma \ref{5.1}, $\d_t^ku \in \tilde{\CC}_n^1$, i.e.,  $\d_t^{k+1}u$ exists and is continuous.

Finally we show  that the partial derivatives $\d_x^k\d_t^lu$
exist and are  continuous for all  $k,l\in \N$.
From \reff{eq:t1} it follows
\beq
\label{dxu}
\d_xu_j(x,t)=\frac{1}{a_j(x)}\left(f_j(x,t)-\d_tu_j(x,t)-\sum_{k=1}^nb_{jk}(x,t)u_k(x,t)\right).
\ee
All partial derivatives with respect to $t$  of the right-hand side (and, hence,   of the left-hand side) 
of \reff{dxu} exist and are continuous,
i.e., $\d_x\d_t^lu_j$ exist and are continuous for all $l\in \N$. Therefore the partial derivative with respect to $x$  
of the right-hand side (and, hence,   of the left-hand side) of \reff{dxu} exists and is continuous, and we have
\beq
\label{d^2xu}
\d^2_xu_j=\frac{1}{a_j^2}\left(a_j\d_x\left(f_j-\d_tu_j-\sum_{k=1}^nb_{jk}u_k\right)-\d_xa_j\left(f_j-\d_tu_j-\sum_{k=1}^nb_{jk}u_k\right)\right).
\ee
Again, all partial derivatives with respect to $t$  of the right-hand side of \reff{d^2xu} exist and are continuous.
Hence, $\d^2_x\d_t^lu_j$ exist and are continuous for all $l\in \N$. Therefore the partial derivative with respect to $x$  of the right-hand side of \reff{d^2xu} exists and is continuous,
i.e., $\d_x^3u_j$ exists and is continuous.
By continuation of this procedure  we get the claim.

\section*{Acknowledgments}
The first author  was supported by the Alexander von Humboldt Foundation.  
Both authors acknowledge support of the DFG Research Center {\sc Matheon}
mathematics for key technologies (project D8).

\end{document}